\title[Counting Rotational Sets for Laminations of the Unit Disk]{Counting Rotational Sets for Laminations of the Unit Disk from First Principles}
\author{John C. Mayer}
\author{Michael J. Moorman}
\author{Gabriel B. Quijano}
\author{Matthew C. Williams}
\date{September 2023}
\begin{document}

\newcommand{\zero}{\_0}
\newcommand{\ds}{\displaystyle}
\newcommand{\ra}{\rightarrow} \newcommand{\mc}{\mathcal}
\newcommand{\mrm}{\mathrm}
\newcommand{\lf}{\lfloor} \newcommand{\rf}{\rfloor}
\newcommand{\ul}{\underline} \newcommand{\ol}{\overline}
\def\interior{\mathrm{Int}} \def\plane{\mathbb{C}}
\def\sphere{\widehat{\mathbb{C}}} \def\nat{\mathbb{N}}
\def\reals{\mathbb{R}} \def\rat{\mathbb{Q}} \def\ints{\mathbb{Z}}
\def\disk{\mathbb{D}}
\newcommand{\rats}{\mathbb{Q}}
\newcommand{\zed}{\mathbb{Z}}
\newcommand{\complex}{\plane}
\newcommand{\wh}{\widehat}
\newcommand{\e}{\varepsilon}
\newcommand{\ucirc}{\mathbb{T}}
\newcommand{\ucurc}{\ucirc}
\newcommand{\0}{\emptyset}
\newcommand{\sse}{\subsection}
\newcommand{\ssse}{\subsubsection}
\newcommand{\op}{order-preserving\ }
\newcommand{\dist}{\mathrm{d}}
\newcommand{\sqr}{\mathrm{Sqr}}
\newcommand{\orb}{\mathrm{Orb}}
\newcommand{\sm}{\setminus}
\newcommand{\itin}{\mathrm{Itin}}
\newcommand{\Mod}[1]{\ (\mathrm{mod}\ #1)}

\newtheorem{thm}{Theorem}
\newtheorem{lem}[thm]{Lemma}
\newtheorem{cor}[thm]{Corollary}
\newtheorem{clm}[thm]{Lemma}
\newtheorem{prop}[thm]{Proposition}

\theoremstyle{definition}
\newtheorem{dfn}[thm]{Definition}
\newtheorem{ques}{Question}
\newtheorem{example}[thm]{Example}

\theoremstyle{remark}
\newtheorem{rem}[thm]{Remark}

\begin{abstract}
By studying laminations of the unit disk, we can gain insight into the 
 structure of Julia sets of polynomials and their dynamics in the complex plane. The polynomials of a given degree, $d$, have a parameter space. The hyperbolic components of such parameter spaces are in correspondence to rotational polygons, or classes of ``rotational sets'', which we study in this paper. By studying the count of such rotational sets, and therefore the underlying structure behind these rotational sets and polygons, we can gain insight into the interrelationship among hyperbolic components of the parameter space of these polynomials.

These rotational sets are created by uniting rotational orbits, as we define in this paper. The number of such sets for a given degree $d$, rotation number $\frac pq$, and cardinality $k$ can be determined by analyzing the potential placements of pre-images of zero on the unit circle with respect to the rotational set under the $d$-tupling map. We obtain a closed-form formula for the count.
Though this count is already known based upon some sophisticated results, our count is based upon elementary geometric and combinatorial principles,  and provides an intuitive explanation. \end{abstract}
\maketitle

\section{Introduction}

\subsection{Motivation}

What are ``rotational sets" for laminations for the unit disk under the action of the angle $d$-tupling map, and why count them? Laminations are a topological and combinatorial model of the connected Julia sets of polynomials considered as functions of the complex numbers, modeled by the plane.  Such models are used both to understand specific types of polynomials and their Julia sets, and to study the parameter spaces of polynomials.  For example, the well-known Mandelbrot set \cite{Mandel} is the parameter space of quadratic polynomials of the form $P_c(z)= z^2+c$ with parameter $c$ with connected Julia set.  The so-called hyperbolic components of that parameter space are of interest, including how they are connected to each other, how they are arranged in the Mandelbrot set, and how many components there are that are associated with attractive orbits (of the associated polynomials) of a given period, rotation number, and the like.  These terms are  defined below.  Our research is concerned with polynomials of higher degree ($d>2$), about which much less is currently understood.


Laminations are composed of {\em leaves} (chords of the unit circle) which form a closed collection of non-crossing segments that are forward and backward invariant under a natural extension of the {\em angle $d$-tupling map} (the angular part or argument of the complex power function $z\mapsto z^d $, where $z=r e^{2\pi it}$ and the argument is the exponent of $e$) on the unit circle.  Leaves connecting points of a rotational set in circular order form polygons in the lamination.  There is a correspondence between rotational polygons in laminations and fixed points of polynomials that have a non-zero infinitesimal rotation number (determined by the argument of the derivative of the polynomial at the fixed point). Such polygons are in correspondence to a fundamental class of hyperbolic components of the parameter space of degree $d$ polynomials with connected Julia set.

For example, in the  Mandelbrot set for the hyperbolic component marked star in Figure \ref{mandel}, all the Julia sets have a (repelling) fixed point (the marked point in the Julia set) which is represented in the lamination for that Julia set by a rotational triangle (marked star).  The Julia set is actually the boundary of the shaded blue region, which contains all the points running off to infinity under iteration of the polynomial.  The white regions in the lamination correspond to the black regions in  the ``filled-in" Julia set.  The filled-in Julia set consists of all points whose orbits under iteration of the polynomial are bounded.


``Counting ... from First Principles" in our title indicates that we will use the most fundamental geometric and combinatorial properties of the angle $d$-tupling map to make the count.
By studying laminations in the abstract without reference to a particular polynomial or Julia set, we aim to reverse the process by which a Julia set leads to a lamination. By understanding what is possible for laminations, we can constrain what is possible for locally connected Julia sets.  Our main result is Theorem \ref{count}.
For a preview of where that theorem takes us with Julia sets, skip ahead to view Figure~\ref{rabbits}.

\begin{figure}
    \includegraphics[width=.3\textwidth]{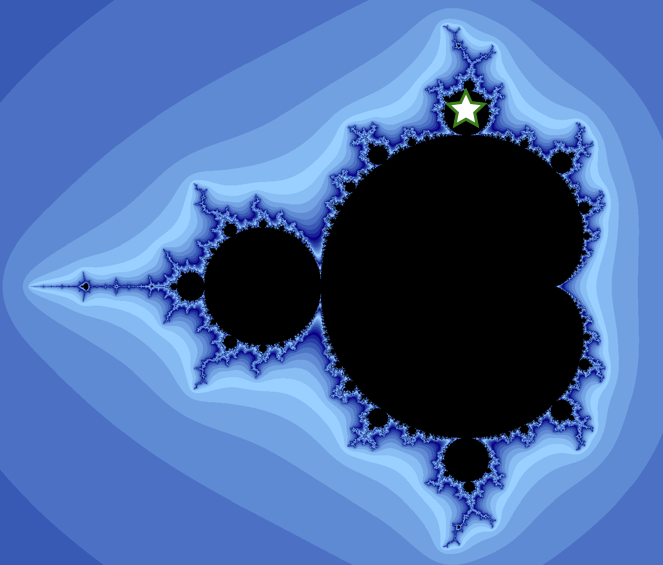}\hfill
    \includegraphics[width=.3\textwidth]{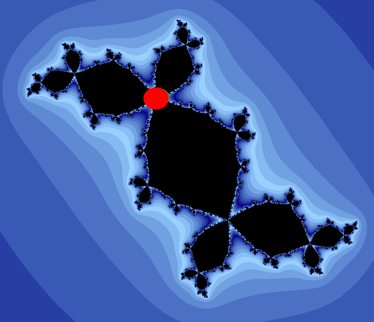}\hfill
    \includegraphics[width=.3\textwidth]{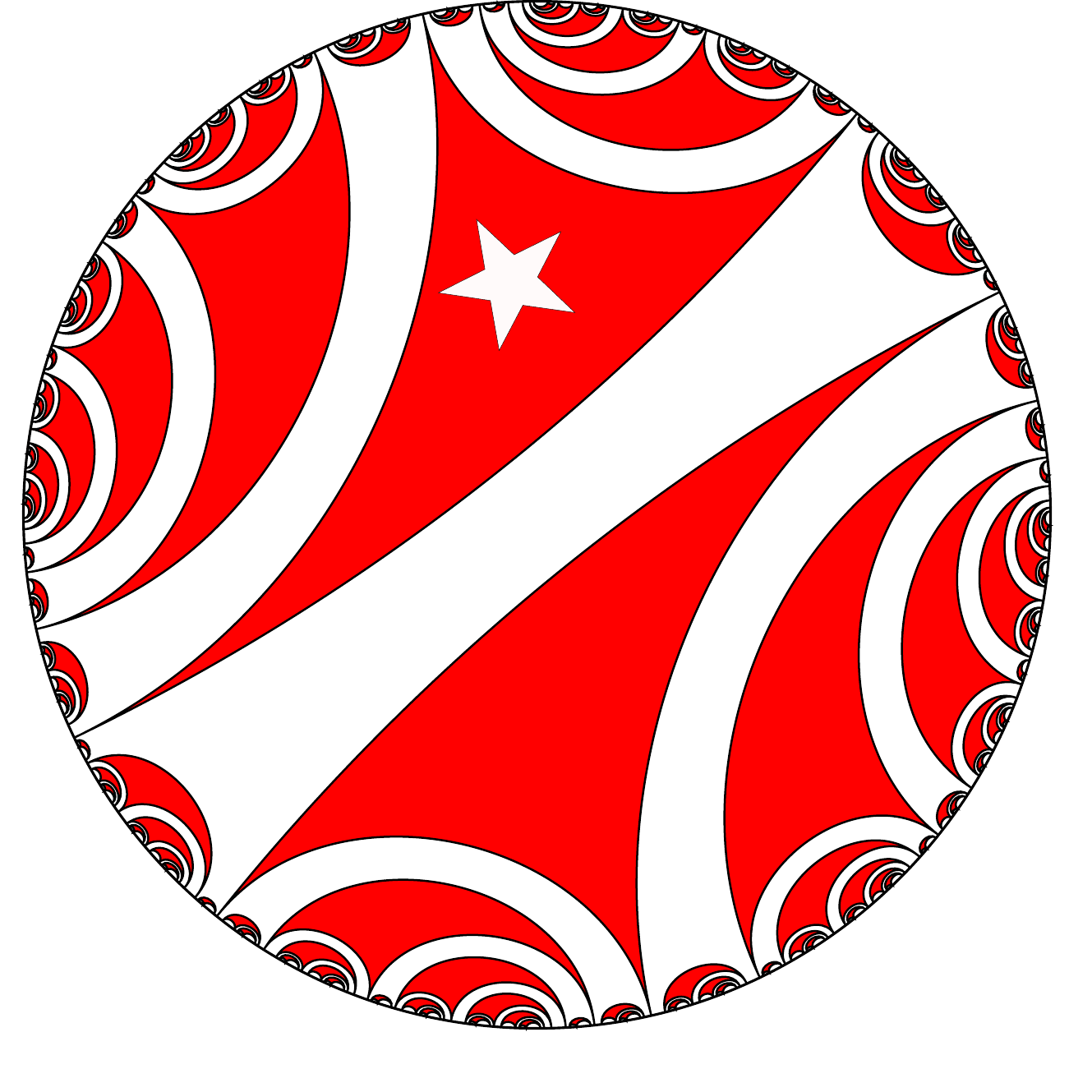}
    \caption{Hyperbolic component of the Mandelbrot set (marked star) with repelling fixed point (marked disk) in Julia set of $P(z)=z^2+(-0.117+0.743i)$ modeled by a rotational triangle (marked star) in the lamination for that Julia set. }
    \label{mandel}
\end{figure}

\subsection{Orbits, the Angle $d$-tupling Map, and Itineraries}
Most of the following definitions are adapted from   \cite{Brittany:2023} and \cite{Blokh:2006}. Elementary proofs of some propositions are left to the reader.


\begin{dfn}Let $f:X\to X$ be a function. By $f^q(x)$ we denote the composition $f(f(\dots f(x)\dots))$,  where $f$ is composed with itself $q$ times. 
\end{dfn}

By convention, $f^0(x)=x$.

\begin{dfn}
Given a point $x\in X$, the {\em orbit} of $x$ is the set $\mathcal{O} = \{f^n(x)\}^{\infty}_{n=0}$. 
\end{dfn}

We denote the unit circle in the Cartesian plane by
$S^1=\{(x,y)\mid x^2+y^2=1\}$.
We will
describe points in $S^1$ by their central angle and we will measure angles in revolutions rather than radians or degrees. A full circle is 1 revolution. We say we measure angles {\em mod 1}, such that angles which differ by a full revolution are the same.

We now consider the specific function $\sigma_d$ in whose orbits we are interested.

\begin{dfn}\label{d-map} Let $d\in\mathbb{Z}$ with $d>1$. Define the {\em angle $d$-tupling map} $\sigma_d:S^1\to S^1$
by $\sigma_d(x)=dx \pmod 1$. \end{dfn}

We represent points on the circle coordinatized by $[0,1)$ by their base $d$ expansion.  In base $d=2$, the notation $\_001$ denotes for us that the digits $001$ repeat infinitely which, as the reader can check, is the point $\frac13\in[0,1)$.  However, in base $d=3$, $\frac13$ is the point $1\_0$, which in our notation means that the initial digit $1$ does not repeat, but the digit $0$ repeats.  We can use the tools of symbolic dynamics, particularly the {\em forgetful shift}. Because points under $\sigma_d$ are multiplied by $d$ and are taken modulo $1$, the first digit of the base $d$ expansion becomes the integer part which goes away after taking the modulus. So we can quickly calculate the next point in an orbit by simply ``forgetting'' the leading digit (when written as a base $d$ expansion). 
For convenience, we will define $\mathbb{Z}_+$ to be $\{i\in\mathbb{Z}\mid i\geq0\}$.

We denote the set of {\em pre-images} under $\sigma_d^q$ of a point
$x$ by $\sigma_d^{-q}(x)=\{y\in S^1\mid \sigma_d^q(y)=x\}$. If there is a $q\in\mathbb{Z}_+$ such that $\sigma_d^q(x)=x$, then the orbit of
$x$ is finite, and we say it is a {\em periodic orbit} and $x$ is a
{\em periodic point}. If $q$ is least for which $\sigma_d^q(x)=x$, then we
say $q$ is the {\em period} of the point (respectively, orbit). The set of points visited on those $q$ iterations make up the orbit $\mathcal O$ for that given point. If this $q$ exists, then for the set ${\mathcal O} = \{\sigma_d^n(x)\mid 0 \le n < q\}$ it is true that $\sigma_d:\mathcal
O\to \mathcal O$ and $\sigma_d({\mathcal
O})={\mathcal O}$.


\begin{prop}
For a given degree $d$, the pre-images of $0$ are
\begin{equation}
 \sigma_d^{-1}(0)=\left\{0, \frac{1}{d}, \frac{2}{d},\dots,\frac{d-1}{d}\right\}
\end{equation}
or when written in base $d$ expansion  
\begin{equation}
    \sigma_d^{-1}(\_0)=\left\{\_0,1\_0,2\_0,\dots,(d-1)\_0\right\}
\end{equation}
\end{prop}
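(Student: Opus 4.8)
The plan is to unwind the definition of $\sigma_d$ and solve the resulting congruence directly. First I would write $\sigma_d^{-1}(0) = \{y \in S^1 \mid \sigma_d(y) = 0\}$ and, using the coordinatization of $S^1$ by $[0,1)$, rewrite the condition $\sigma_d(y) = 0$ as $dy \equiv 0 \pmod 1$. This says precisely that $dy$ is an integer, so $y = \frac{k}{d}$ for some $k \in \mathbb{Z}$; intersecting with $y \in [0,1)$ forces $0 \le k \le d-1$, and conversely each such $\frac{k}{d}$ satisfies $d \cdot \frac{k}{d} = k \equiv 0 \pmod 1$. This yields the first displayed equality, with exactly $d$ distinct elements.

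For the second form, I would invoke the base-$d$ expansion conventions fixed just before the statement, where a trailing $\_0$ denotes the digit $0$ repeating forever. Each $\frac{k}{d}$ with $0 \le k \le d-1$ has the terminating base-$d$ expansion $0.k000\dots$, that is, the single digit $k$ followed by all zeros, which in the paper's symbolic notation is exactly $k\_0$ (and $k = 0$ gives $\_0$ itself). Listing $k = 0, 1, \dots, d-1$ then produces $\{\_0, 1\_0, 2\_0, \dots, (d-1)\_0\}$, which agrees digit-by-digit with the set obtained in the first part.

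Since the argument is a one-line solution of a linear congruence together with a routine check of base-$d$ digits, I do not anticipate any real obstacle. The only points deserving a word of care are the endpoint bookkeeping --- confirming that $k = d$ is excluded, since it would give $y = 1 \equiv 0$, already recorded as the case $k = 0$, so the list has exactly $d$ elements with no repetition --- and consistency with the forgetful-shift description of $\sigma_d$, namely that deleting the leading digit of $k\_0$ indeed returns $\_0 = 0$.
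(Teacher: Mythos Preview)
Your argument is correct: solving $dy\equiv 0\pmod 1$ on $[0,1)$ gives exactly $y=\frac{k}{d}$ for $0\le k\le d-1$, and the base-$d$ translation is immediate. The paper does not supply its own proof of this proposition --- it is one of the elementary results explicitly left to the reader --- so there is nothing further to compare; your write-up is precisely the intended verification.
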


These pre-images serve as the border between neighboring intervals of length $\frac{1}{d}$ in $S^1$.

\begin{dfn}\label{preimage0} Fix $d>1$.  Define intervals $I_0=[0,\frac1d)$, and in general
$I_j=[\frac{j}{d},\frac{j+1}{d})$ for $1\le j\le d-1$.  Recall $0$
and $1$ are identical in $S^1$.  Then $S^1$ is the disjoint union $\bigcup_{j=0}^{d-1} I_j$.
\end{dfn}

For instance, the smallest non-zero pre-image of $0$, $\frac{1}{d}$ (equivalently, $1\_0$), is the border between $I_0$ and $I_1$.  Note that each interval $I_j$ maps one-to-one in counterclockwise order from $\_0$ onto $S^1$.  Consequently, within each $I_j$ there is a preimage of every $I_k$ consecutively in order, but of length $\frac{1}{d^2}$.
With these tools in hand, we can now express orbits in a much more useful manner.

\sse{Itineraries}  We have given the points in $S^1$ when considered under the map $\sigma_d$ their base $d$ expansion.
This will allow us to describe the orbit of a point in the circle under $\sigma_d$ in terms the visits of its orbit to the distinguished intervals in Definition~\ref{preimage0}.
The {\em itinerary} of a point $s\in S^1$ is the ordered list of its visits in its orbit to the distinguished intervals. 

The proof of the following Proposition is left to the reader.
\begin{prop}\label{itin}
    Under $\sigma_d$, the itinerary of a point is exactly its base $d$ expansion.  Consequently, two points $s$ and $t$ in $S^1$ under $\sigma_d$ have the same itinerary if, and only if, $s=t$.
\end{prop}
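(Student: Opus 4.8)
The substance is in the first sentence; the ``consequently'' clause is then immediate, since under the convention already in force in the excerpt --- whereby $\tfrac1d$ is expanded with a tail of repeated $0$'s rather than a tail of repeated $(d-1)$'s --- each point of $S^1$, coordinatized by $[0,1)$, has a \emph{unique} base $d$ expansion; so once we know the itinerary of a point \emph{is} that expansion, equal itineraries force equal points, and the converse is trivial. The plan is therefore to prove: writing $s=\sum_{i\ge1}s_i d^{-i}$ for the base $d$ expansion of $s\in S^1$, the $n$-th entry (indexing from $n=0$) of the itinerary of $s$ equals $s_{n+1}$.

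I would isolate a single one-step lemma and then iterate it. For $x\in[0,1)$ and $j\in\{0,\dots,d-1\}$: we have $x\in I_j$ if and only if the leading digit of the base $d$ expansion of $x$ is $j$, and in that case $\sigma_d(x)=dx-j$ has base $d$ expansion obtained from that of $x$ by deleting the leading digit. The ``if'' direction rests on the estimate $0\le\sum_{i\ge2}x_i d^{-i}<\tfrac1d$, whose strict upper bound is exactly where the no-repeated-$(d-1)$'s convention enters; the ``only if'' then follows because the $I_j$ partition $[0,1)$, so $x$ unambiguously determines $j$. The shift statement is the algebraic identity $d\big(\sum_{i\ge1}x_i d^{-i}\big)-x_1=\sum_{i\ge1}x_{i+1}d^{-i}$, and the resulting digit string is still a legitimate (canonical) base $d$ expansion because deleting a leading digit cannot produce a tail of repeated $(d-1)$'s. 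This is precisely the ``forgetful shift'' described informally earlier in the excerpt.

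Granting the lemma, a straightforward induction on $n$ shows that $\sigma_d^n(s)$ has base $d$ expansion $\sum_{i\ge1}s_{n+i}d^{-i}$: the case $n=0$ is the definition of the expansion of $s$, and the inductive step is one application of the lemma. Hence $\sigma_d^n(s)\in I_{s_{n+1}}$, which says exactly that the itinerary of $s$ is $(s_1,s_2,\dots)$, its base $d$ expansion; and the ``consequently'' clause then follows as noted above. The one place I would watch carefully --- and the likeliest source of a slip --- is the meshing of the half-open intervals $I_j=[\tfrac jd,\tfrac{j+1}d)$ with the expansion convention: one must keep $\tfrac jd\in I_j$ rather than $I_{j-1}$, use the expansion terminating in $0$'s, and note that these two choices are precisely what makes the itinerary single-valued and equal to the digit string. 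A reader willing to take the forgetful-shift description as established can read the Proposition off it in two lines.
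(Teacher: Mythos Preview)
Your proposal is correct and is precisely the argument the paper has in mind: the paper explicitly leaves this proof to the reader, having already described the ``forgetful shift'' informally in the paragraph preceding the Proposition, and your one-step lemma plus induction is exactly a careful formalization of that shift. Your attention to the meshing of the half-open intervals $I_j=[\tfrac jd,\tfrac{j+1}d)$ with the terminating (no-tail-of-$(d{-}1)$'s) expansion convention is the only genuinely delicate point, and you handle it correctly.
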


Now, the points of an orbit are defined by their relative placement to pre-images of zero. The utility in this is found in how orbits can be defined by the location of pre-images relative to the points of the orbit. This will be essential to counting rotational sets from first principles.

\begin{figure}[h]
    \includegraphics[scale=0.335]{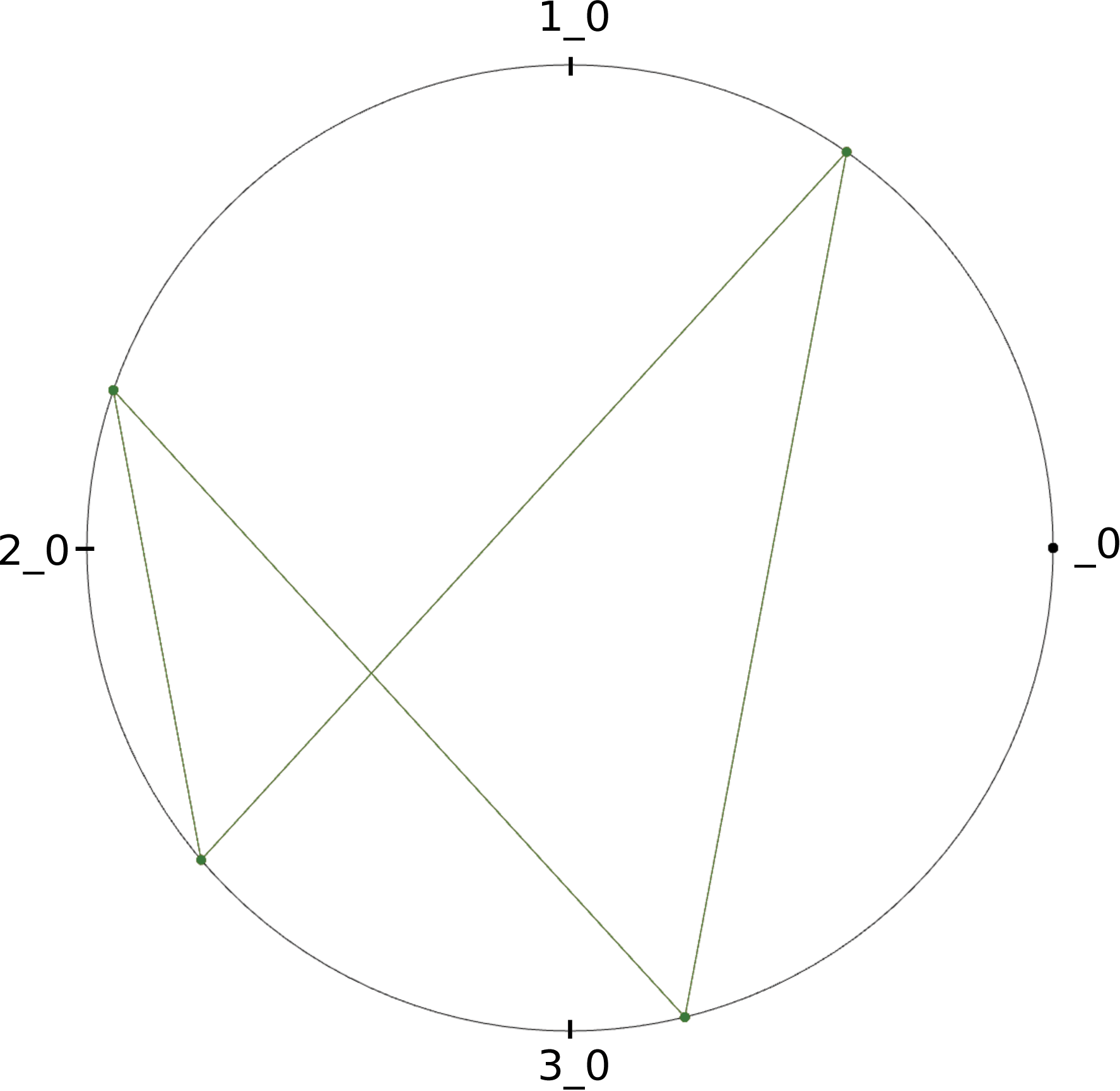}
    \caption{See if you can identify this orbit for $\sigma_4$ based on the position of its points relative to pre-images of $\zero$.}
    \label{fig:itinerary}
\end{figure}

Because any given itinerary for a point in a periodic orbit can be shifted to find the itineraries of every other point in that orbit, periodic orbits can be clearly referenced with the itinerary of any given point it contains. For consistency we will define orbits to have the same itinerary as their smallest (compared to $\_0$) point.

\begin{dfn} Let ${\mathcal O}=\{0\leq x_1<x_2<x_3\dots<x_q<1\}$ be a periodic orbit. Define $Itin(\mathcal{O})=Itin(x_1)$.
\end{dfn}

\begin{dfn}
A {\em gap} is a complementary interval in $S^1 \setminus \mathcal{O}$.
\end{dfn}

\sse{Spatial and Temporal Order of Orbits} 

The count of rotational sets is fundamentally based upon the comparison of spatial and temporal orders of points in an orbit with reference to the pre-images of $\_0$.  See Figure \ref{fig:temporal-spatial}.

\begin{dfn}
{\em Spatial order} refers to the ordering of points in an orbit by value, from smallest to largest in $[0,1)$. In terms of a map onto $S^1$, this order is given by starting at $\zero$ and following our points counterclockwise. 
\end{dfn}

\begin{dfn}
{\em Temporal order} orders the points in an orbit by starting with the lowest valued point in our orbit (or closest to $\zero$ spatially, measuring from 
$\zero$ counterclockwise) and applying $\sigma_d$ repeatedly. So, temporal ordering is based on how our itinerary is followed by repeated applications of $\sigma_d$. 
\end{dfn}

\begin{figure}[h]
    \captionsetup{justification=centering}
    \includegraphics[scale=0.28]{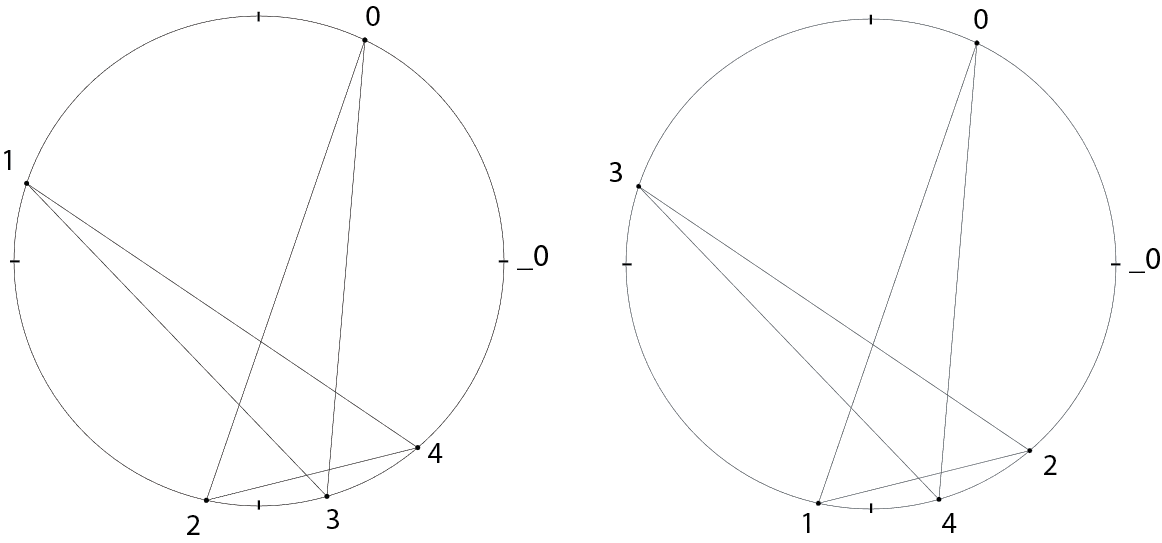}
    \caption{Spatial ordering left; Temporal ordering right.}
    \label{fig:temporal-spatial}
\end{figure}





\section{Rotational Sets}


Consider $\sigma_d:S^1\to S^1$ for a particular $d > 1$.  

\begin{dfn}
Let ${\mathcal O}=\{x_1<x_2<x_3\dots<x_q\}$ be a periodic orbit under $\sigma_d$. If, and only if, there exists a $p \in \mathbb{Z}^+$ such that for all $i \in \{1,2,\dots q\}$, $\sigma_d(x_i)= x_{i+p\pmod q}$, then we say that $\mathcal{O}$ is a {\em rotational} periodic orbit with {\em rotation number} $\frac{p}{q}$ (in lowest terms).
\end{dfn}

\begin{rem}\label{itinToPreimage}
Note that the numerator $p$ of the rotation number $p/q$ of a rotational periodic orbit is sufficient to determine the temporal order of its points. Along with how many points of the orbit are in each interval, this is enough to determine its itinerary.

It follows from Proposition \ref{itin} that if, and only if, two rotational periodic orbits have the same $p/q$ and each of their corresponding points are in the same intervals, then they have the same itinerary and are the same orbit.

As for the practical generation of this itinerary, it can be found by reading off the interval of each point by starting with $x_1$ and ``jumping'' forwards $p$ points counter-clockwise along $S^1$, repeating until getting back to that initial point.  Consider what $p$ is in the rotational orbit in Figure \ref{fig:temporal-spatial}.
\end{rem}


A consequence of how the rotation number of an orbit can describe the forward orbits of its points is that it can also describe their pre-images. Since $\zero$ lies between $x_1$ and $x_q$, at least one pre-image of $\zero$ must lie between $x_{q-p} \in \sigma_d^{-1}(x_{q})$ and $x_{1+q-p} \in \sigma_d^{-1}(x_{1})$. We call such pre-images the {\em Principal Pre-image} of their respective orbits.

\begin{dfn} Let the {\em principal preimage} for a rotational orbit $\mathcal{O}$ be a pre-image of $\zero$ lying between $x_{q-p} \in \sigma_d^{-1}(x_{q})$ and $x_{1+q-p} \in \sigma_d^{-1}(x_{1})$. 
\end{dfn}

The reader can verify that there must always be a principal preimage.

\sse{Rotational Sets Containing Multiple Orbits}
Not only can points rotate together while maintaining order, but so too can multiple orbits together, forming a rotational set.

\begin{dfn}Let ${\mathcal
P}=\{x_i\mid 0\leq x_1<x_2<x_3<\dots<x_{qk}<1\}$ be a finite set in consecutive counterclockwise order
in $S^1$.  We say $\mathcal P$ is a {\em rotational set} containing $k$ orbits with rotation number $\frac{p}{q}$ for
$\sigma_d$ if, and only if, 
\begin{enumerate} 
    \item $\sigma_d({\mathcal{P}})={\mathcal P}$ 
    \item $x_i,x_{i+1},\dots,x_{i+k-1 \pmod {qk}}$ for $i \in [1,2,\dots qk]$ are in different orbits 
    \item $x_i$ moves to $x_{i + pk\pmod{qk}}$ 
\end{enumerate}
\end{dfn}

\begin{dfn} Let $G_i = \{x_{(i-1)k + 1}, x_{(i-1)k + 2},\dots,x_{ik} \}$ where $ i\in[1,2,\dots,q]$. We say $G_i$ is the $i$th {\em group} and $G_i$ is the set of the $i$th points spatially of each orbit.
\end{dfn}

\begin{rem}
Items (2) and (3) of Definition 2.3 also show that $G_i$ moves together preserving spatial order to $G_{i + p\pmod{q}}$.
\end{rem}

\begin{dfn}  Let a {\em principal preimage} for a rotational set ${\mathcal P}$ be the pre-image of $\zero$  that lies between $G_{q-p} \subset \sigma_d^{-1}(G_{q})$ and $G_{q-p+1} \subset \sigma_d^{-1}(G_{1})$.
\end{dfn}

\begin{rem} \label{principal} For the principal pre-image of a set, it must lie between $x_{(q-p)k} \in \sigma_d^{-1}(x_{qk})$ and $x_{1+(q-p)k} \in \sigma_d^{-1}(x_{1})$. $x_{1+(q-p)k}$ is the smallest point in group $G_{q-p+1}$ and $x_{(q-p)k}$ is the largest point in group $G_{q-p}$, therefore the principal pre-image lies between those two groups.
\end{rem}

\begin{figure}[h]
    \centering
    \includegraphics[scale=0.35]{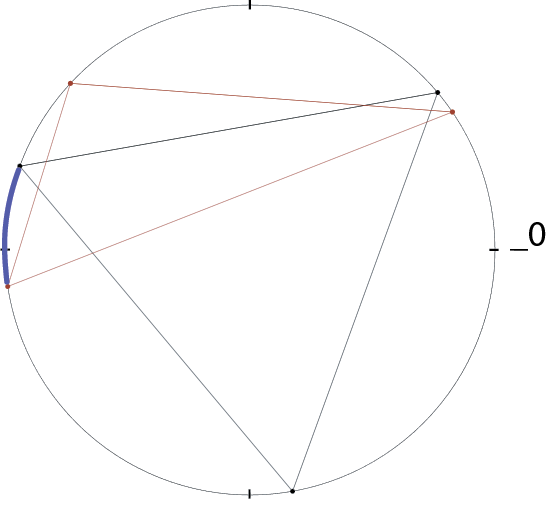}
    \caption{These two rotational orbits for $\sigma_4$ form a rotational set. The high-lighted inter-group gap contains the principal pre-image of this rotational set. See if you can identify these orbits, as practice.}
    \label{fig:rotSet}
\end{figure}

In order to aid with the identification and counting of rotational sets, we need to differentiate gaps between points that are within groups from those outside groups. This distinction is necessary as pre-images that lie in the former are what differentiate orbits within a rotational set from each other. In other words, the pre-images that are within the groups are such that if they were removed, the orbits would no longer be different.


\begin{dfn} Let {\em intra-group} gaps be gaps that lie within a group, or in other words are gaps that are in between two points from two different orbits that aren't the first and last orbits spatially in their group.
\end{dfn}
 
\begin{dfn} Let {\em inter-group} gaps be gaps that lie outside a group (or between groups). These are all of the gaps that are not intra-group gaps. 
\end{dfn}

Note that with these definitions, the principal pre-image must always lie within an inter-group gap. See if you can build intuition for this by considering Figure \ref{fig:rotSet} through this new lens.

\section{Algorithms and resulting Formulas} 

\subsection{Counting Rotational Sets}

 Goldberg \cite{Goldberg:1992} counted the rotational orbits with a given rotation number $\frac{p}{q}$ for $\sigma_d$ and indicated that rotational sets containing multiple orbits with that rotation number could also be counted, providing an example count for $\sigma _3$, but no general formula.  As a corollary to her characterization of rotational orbits in  terms of their temporal and spatial placement with respect to the $d-1$ fixed points of $\sigma_d$, she showed that the maximal number of orbits in a rotational set for $\sigma_d$ was $d-1$.  This also follows as a corollary to our main theorem, Theorem \ref{count}.  McMullen (\cite{McMullen:2010}, Section 2) built upon Goldberg to provide a criterion for two orbits for $\sigma_d$ with the same rotation number to be compatible in one rotational set. Tan \cite{Tan:2019} used an algorithm based upon the Goldberg/McMullen criterion to count the number of rotational sets containing $k$ orbits for $\sigma_d$ with a given rotation number $\frac pq$. So, while the content of our main theorem is known, the proof here is new and more elementary.


\begin{thm}[Identifying and Counting Rotational Sets]\label{count} Consider the collection $\mathcal{B}$ of all rotational sets for a given degree $d$, rotation number $\frac{p}{q}$ in lowest terms, and number $k$ of distinct orbits per set. 
The cardinality of $\mathcal{B}$ is given by 
\begin{equation}
|\mathcal{B}| =  \sum_{i=k-1}^l \left[ \binom{d+q-2}{d-2-i} \sum_{j=0}^{k-1} \left[ (-1)^j \binom{k-1}{j} \binom{q(k-1-j)}{i} \right] \right]
\end{equation}

\noindent where

\begin{equation}
l = 
\begin{cases} 
      q(k-1) & d-2 > q(k-1) \\
      d-2 & otherwise
   \end{cases}
\end{equation}

\end{thm}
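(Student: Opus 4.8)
The plan is to convert the problem into a combinatorial count using Proposition~\ref{itin}. Since a rotational set $\mathcal{P}$ is determined by the itineraries of its points, each itinerary is the sequence of intervals $I_j$ visited by the iterates, and $\sigma_d$ permutes $\mathcal{P}$ rigidly (the block $G_i$ is sent to $G_{i+p}$, indices mod $q$, spatial order preserved), the set $\mathcal{P}$ is completely encoded by the circular arrangement on $S^1$ of its $qk$ points --- grouped into $q$ consecutive blocks of $k$ --- together with the $d$ preimages of $\zero$ (whose own cyclic order is fixed, with the point $\zero$ distinguished). So I would first show that $|\mathcal{B}|$ equals the number of such arrangements that are \emph{realizable}, i.e.\ that actually come from a rotational set, different realizable arrangements giving different sets by Proposition~\ref{itin}. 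This reduces the theorem to counting realizable arrangements.

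Next I would pin down which arrangements are realizable. Three facts carry the argument. First, $\zero$ lies in the inter-group gap between $G_q$ and $G_1$ (using $p/q$ in lowest terms with $q>1$ to exclude $x_1=\zero$) and the principal preimage lies in the inter-group gap between $G_{q-p}$ and $G_{q-p+1}$; hence at least two of the $d$ preimages occupy inter-group gaps. Second, every intra-group gap contains at most one preimage of $\zero$; I would prove this as a short lemma by examining the action of $\sigma_d$ on a single group. Third, for an arrangement to describe $k$ \emph{distinct} orbits it is necessary and sufficient that for every $m\in\{1,\dots,k-1\}$ at least one of the $q$ groups has its $m$-th intra-group gap occupied: otherwise the $m$-th and $(m+1)$-th points of every group lie in a common interval with no preimage of $\zero$ between them, so $\sigma_d$ multiplies the arc between them by $d$ at each step, and since that arc returns to itself after the $q$ steps fixing a group, the arc equals $d^q$ times itself, hence has length $0$, so two of the orbits coincide. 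Given these, I would partition $\mathcal{B}$ by $i$, the number of \emph{occupied} intra-group gaps: the third fact forces $i\ge k-1$, while $i\le q(k-1)$ is trivial and $i\le d-2$ follows from the first fact and the second; thus $i$ runs exactly over $k-1\le i\le l$.

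For a fixed $i$ the count factors as (the number of admissible choices of which $i$ intra-group gaps are occupied) times (the number of realizable completions using the remaining $d-i$ preimages). The first factor is $\sum_{j=0}^{k-1}(-1)^j\binom{k-1}{j}\binom{q(k-1-j)}{i}$ by inclusion--exclusion over the $k-1$ ``slots,'' each consisting of $q$ candidate gaps and each of which must be met. The second factor is the heart of the matter and the step I expect to be the main obstacle: I claim it equals $\binom{d+q-2}{d-2-i}$, independently of $p$ and of which intra-group gaps were chosen. The idea is that the $i$ occupied intra-group preimages cut the $q$ groups into $q+i$ ``pieces'' (maximal runs of consecutive points not separated by a preimage of $\zero$), each forced to lie inside a single interval $I_j$; the remaining $d-i$ preimages organize these pieces around the circle, of which $i$ interval-boundaries have been ``spent'' inside groups, leaving $d-1-i$ effective boundaries. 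One must then characterize the realizable layouts of the $q+i$ pieces among the intervals --- this is exactly the itinerary-consistency condition, which I would reduce (e.g.\ by lifting $\sigma_d$ to $\reals$ and bounding the resulting deployment integers) to a monotonicity-type combinatorial condition --- and show that exactly $\binom{(q+i)+(d-1-i)-1}{(d-1-i)-1}=\binom{d+q-2}{d-2-i}$ of them occur. The case $i=0$, $k=1$ is precisely Goldberg's count $\binom{d+q-2}{d-2}$, so this step amounts to re-deriving her theorem from first principles and then tracking the effect of the intra-group preimages.

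Summing $\binom{d+q-2}{d-2-i}\sum_{j=0}^{k-1}(-1)^j\binom{k-1}{j}\binom{q(k-1-j)}{i}$ over $k-1\le i\le l$ then yields the stated formula, with the degenerate cases handled automatically: when $k-1>l$ --- in particular when $k>d-1$ --- the index set is empty and $|\mathcal{B}|=0$, recovering Goldberg's bound that a rotational set for $\sigma_d$ has at most $d-1$ orbits. The difficulties I anticipate, in order of severity, are: proving the inter-group layout count is independent of $p$ and of the intra-group choice, so that the factorization into intra- and inter-group data is legitimate; carrying out the Goldberg-type layout count with the piece-splitting bookkeeping, which is where the $\binom{d+q-2}{d-2-i}$ (rather than the naive degree-$(d-i)$ analogue $\binom{d-i+q-2}{d-i-2}$) comes from; and giving a clean proof of the ``at most one preimage per intra-group gap'' lemma.
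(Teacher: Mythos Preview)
Your overall architecture---encode a rotational set by the placement of the $d$ preimages of $\zero$ among the gaps, pin $\zero$ and the principal preimage in inter-group gaps, stratify by the intra-group data, and run inclusion--exclusion over the $k-1$ ``slots''---is exactly the paper's strategy, and your inclusion--exclusion factor $\sum_{j=0}^{k-1}(-1)^j\binom{k-1}{j}\binom{q(k-1-j)}{i}$ is obtained the same way. The divergence, and the genuine gap, is your second ``fact.''

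The claim that every intra-group gap contains at most one preimage of $\zero$ is false. Take $d=5$, $p/q=1/2$, $k=2$, and the rotational set $\mathcal{P}=\{1/24,\,3/24,\,5/24,\,15/24\}$, the union of the $\sigma_5$-orbits $\{1/24,5/24\}$ and $\{3/24,15/24\}$. The groups are $G_1=\{1/24,3/24\}$ and $G_2=\{5/24,15/24\}$, and one checks directly that $\sigma_5$ carries $G_1$ to $G_2$ and back, preserving order. The intra-group gap $(5/24,15/24)$ contains both $2/5$ and $3/5$. So your proposed lemma cannot be proved, and with it collapses your accounting ``the remaining $d-i$ preimages'': when an occupied intra-group gap holds several preimages, the number left over after fixing which $i$ intra-group gaps are occupied is not determined.

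The paper sidesteps this entirely and, in doing so, avoids the Goldberg-type realizability analysis you flag as the main obstacle. Once $\zero$ and the principal preimage are fixed and a set $g$ of $i$ distinct intra-group gaps is chosen (subject to the congruence condition), the remaining $d-2-i$ preimages are distributed, \emph{with repetition}, among the $q+1$ inter-group gaps together with the $i$ already-occupied intra-group gaps---but \emph{not} into any new intra-group gap, since that would change $g$. That is a stars-and-bars count with $(q+1)+i$ bins and $d-2-i$ balls, giving
\[
\binom{(q+1+i)+(d-2-i)-1}{\,d-2-i\,}=\binom{d+q-2}{d-2-i},
\]
which is precisely the second factor. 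So the ``extra $i$'' you noticed (the discrepancy with the naive $\binom{d-i+q-2}{d-i-2}$) comes not from piece-splitting but from allowing additional preimages to pile into already-occupied intra-group gaps. If you drop the false lemma and let $i$ denote the number of \emph{distinct} occupied intra-group gaps (rather than the number of preimages in them), your outline becomes the paper's proof, and the second factor falls out by elementary stars and bars with no lift to $\reals$ and no appeal to Goldberg.
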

\begin{proof}
For any given rotational set $B$ within $\mathcal{B}$, there are $q$ points in each of the $k$ orbits within $B$. The count of each way to place pre-images between these neighboring points is the same as the count of rotational sets because the placement of pre-images dictates the digits for the itinerary of each point as can be seen in Remark \ref{itinToPreimage}. However, placing a pre-image between $\zero$ and the smallest point in the set is different from placing a pre-image between the largest point and $\zero$. The former would increase the digits in all the itineraries while the latter would not. Therefore, we need to count the number of ways to place pre-images within the gaps distinguished by $\zero$ and the points within $B$.


Let the range of values from $0$ to $qk$ correspond with the gaps between neighboring points in the set of points that contain $\zero$ and the points within each orbit in $B$. $0$ corresponds with the gap between $\zero$ and the smallest point in $B$, $1$ with the gap between the smallest and second smallest, and so on. 

There are $d$ pre-images to place. The first is $\zero$, which is its own pre-image. The next is the principal pre-image, whose position is already determined (as can be seen in Remark \ref{principal}). This leaves us with $d-2$ pre-images to place.

Now, we must concern ourselves with where these pre-images can be placed to form a valid rotational set with $k$ orbits.

\begin{lem} Label gaps with their congruence class modulo $k$.
Rotational sets in $\mathcal{B}$ must have at least one pre-image in each  non-zero congruence class. Therefore, the cardinality of $\mathcal{B}$ is equivalent to that of $P$ when $P$ is defined to be the set of all sets composed of $d-2$ non-negative integers less than or equal to $qk$, such that each one contains at least one element from each non-zero congruence class modulo $k$.
\end{lem}

\begin{proof}

 In order to ensure differentiation between orbits, each orbit must be differentiated from its intra-group neighbors (the groups are differentiated by the principal pre-image and $\zero$). The first spatial orbit is not a neighbor with the last as they lie on opposite sides of any given group. So for each intra-group neighbor, an orbit must have at least one pre-image between one of its points and that neighbor's points. Therefore, this requirement can only be fulfilled by placing pre-images in intra-group gaps. 

Here is an example to provide clarity.  The reader is invited to draw their own illustration for this example. For the first orbit (spatially) to be distinct from the second orbit, there must be a pre-image either in the gap between their smallest points respectively, second smallest, or any other pair of corresponding points. This particular restriction for the first and second  orbits can be restated as the requirement for a pre-image to exist in a gap with label $n$ such that $n$ is in the congruence class of $1$ modulo $k$. This rule can be generalized for all intra-group gaps by separating them into congruence classes. For the $n$th and $(n+1)$th orbits to be differentiated, there must be at least one pre-image in the set of gaps with labels in $\{ x \in [0\dots qk] \mid x \in [n]_k\}$. Therefore, all non-zero congruence classes require at least one pre-image for the rotational set to be valid.


The set of rotational sets, with the restrictions articulated above, will have the same cardinality as $P$ when defined as follows. $P$ is the set of all sets composed of $d-2$ non-negative integers less than or equal to $qk$, such that each one contains at least one element from each non-zero congruence class modulo $k$.

\end{proof}

We will now define sets that correspond with the choice of gaps in which to put pre-images. As of now, we are only placing one pre-image per gap even though more than one can be placed for a valid rotational set. This is done to make counting simpler later on.

We will refer to the range of labels for gaps as $\lambda$, and define it as: 
\begin{equation}
    \lambda = \{x \in \mathbb{Z}_{+} \mid x \leq qk\}
\end{equation}
In other words, $\lambda$ is the set of non-negative integers less than or equal to $qk$. We will refer to the set of all labels for intra-group gaps as $\psi$ and define $\psi$ as 
\begin{equation}
    \psi = \{x \in \lambda \mid x \Mod{k} \neq 0 \}
\end{equation}

There are multiple possible values for how many pre-images may be in intra-group gaps for any given rotational set. We will call the number of pre-images in intra-group gaps $i$. 

Define $T_i$ as the set of all sets that contain $i$ elements from $\psi$. We also know that $|T_i| = \binom{|\psi|}{i} = \binom{q(k-1)}{i}$ since $|\psi| = q(k-1)$ because there are $k-1$ non-zero congruence classes with $q$ integers in each.

\begin{equation}
    |T_i| = \binom{q(k-1)}{i}
\end{equation}

\begin{lem}
    The range for the possible number of pre-images in intra-group gaps varies from $k-1$ to $l$ where
\begin{equation}
l =
    \begin{cases} 
      q(k-1) & d-2 > q(k-1) \\
      d-2 & otherwise
   \end{cases}
\end{equation}
\end{lem}

\begin{proof}

 Only $k - 1$ pre-images are required for differentiation. The minimal value of $i$ is the minimal number of pre-images required for differentiation, $k - 1$. As for the maximum value of $i$, it can be limited by either the number of empty gaps, $q(k - 1)$, or the number of pre-images to place, $d-2$. Therefore, the maximal value of $i$ is $l$ where

\begin{equation}
l = 
\begin{cases} 
      q(k-1) & d-2 > q(k-1) \\
      d-2 & otherwise
   \end{cases}
\end{equation}

\end{proof}

The sets in $T_i$ for $i \in [k-1, k, \dots ,l]$ that follow our requirement of differentiation are valid (yet may not be complete as these only correspond to $i$ pre-images of the $d-2$ that need to be placed). So we will define $\gamma_i$ as the subset of $T_i$ that contains all the sets where there is at least one element from each non-zero congruence class modulo $k$.


\begin{lem}
The count of sets in $\mathcal{B}$ such that each $B \in \mathcal{B}$ has $i$ pre-images in its intra-group gaps is given by
\begin{equation}
\sum_{j=0}^{k-1} (-1)^{j} \binom{k-1}{j} \binom{q(k-1-j)}{i}
\end{equation}
\end{lem}

\begin{proof}

Each set within $P$ that has $i$ elements in non-zero congruence classes corresponds with an element in $\gamma_i$. Similarly, each rotational set in $\mathcal{B}$ that has $i$ pre-images in intra-group gaps corresponds with an element in $\gamma_i$. For counting purposes, we found it easier to find $T_i \setminus \gamma_i$ than $\gamma_i$, so we will define $W_i=T_i \setminus \gamma_i$.

We can identify and count the sets in $W_i$ (as opposed to the valid sets in $\gamma$) by noticing the equivalence of the problem with finding the union of sets. The goal is to find every possible way to leave at least one congruence class unfilled. Categorize each placement as belonging to a series of sets $(C_1, C_2,\dots , C_{k-1})$ where $C_j$ is defined as the set of elements of $T_i$ where the $j$th congruence class is not represented. With this definition, a singular placement can belong to multiple sets $C_j$, and we seek the union of each of these sets. In other words, $\bigcup\limits_{j=1}^{k-1} C_j = W_i$ because it contains every set where at least 1 non-zero congruence class is not represented. This is a well known problem, and the solution utilizes the inclusion-exclusion  principle \cite{InEx} with the count given by


\begin{equation} 
|W_i| = \left|\bigcup\limits_{j=1}^{k-1} C_j\right| =  \sum_{j=1}^{k-1}  (-1)^{j+1} \binom{k-1}{j} \binom{q(k-1-j)}{i}
\end{equation}
Since $\gamma_i$ is defined as the set difference of $T_i$ and $W_i$, 
\begin{equation}
|\gamma_i| = |T_i| - |W_i| = \binom{q(k-1)}{i} - \sum_{j=1}^{k-1} (-1)^{j+1} \binom{k-1}{j} \binom{q(k-1-j)}{i}    
\end{equation}which can be rewritten as \begin{equation}
\sum_{j=0}^{k-1} (-1)^{j} \binom{k-1}{j} \binom{q(k-1-j)}{i}
\end{equation}

\end{proof}

We have identified and counted all valid placements of intra-group pre-images. In order to finish constructing a rotational set, the remaining pre-images must be placed in between groups. For each of these placements in $\gamma$, the number of pre-images left to place naturally depends on the number of pre-images already placed. For any given set $g \in \gamma$ with cardinality $i$, there are $d-2-i$ elements from $\lambda$ (all the labels for the gaps) that need to be added to construct an element in $\mathcal{B}$. 

There are a few limitations on which gaps pre-images can be placed in, due to the fact that our prior count depends on certain gaps lacking pre-images and others having at least one. Therefore, we can place pre-images in intra-group gaps that already have pre-images or any given inter-group gap. In other words, the added $d-2-i$ elements must either be duplicates of elements already in $g$ which are also in $\psi$ or elements in $\lambda \setminus \psi$. There are $\binom{d+q-2}{d-2-i}$ ways to choose these integers, therefore there are these many elements in $P$ and corresponding orbits for any given set $g$.

As argued before, $i$ (the cardinality of $g$) ranges from $k-1$ to $l$.   


Bringing all of this together, $|\mathcal{B}|$ and $|P|$ are equal to the sum of $\binom{d+q-2}{d-2-i}|\gamma|$ over possible values of $i$.


\begin{equation}
|P| = |\mathcal{B}| = \sum_{i=k-1}^l \left[ \binom{d+q-2}{d-2-i} \sum_{j=0}^{k-1} \left[ (-1)^j \binom{k-1}{j} \binom{q(k-1-j)}{i} \right] \right]
\end{equation}
\end{proof}

\subsection{Identifying Rotational Sets Containing a Given Orbit}

This count is certainly insightful, but in order to gain insight as to specific laminations that one may examine, it would be useful to be able to count and identify rotational sets that contain a given orbit. Here we state that this can be done, using a method quite similar to the method used in the proof of the previous counting theorem.

\begin{thm}[Identifying Maximal Rotational Sets that contain a given orbit] Consider the orbit $\mathcal{O}$ with degree $d$ and rotation number $\frac{p}{q}$ in lowest terms. An exhaustive list of all the rotational sets that contain $\mathcal{O}$ can be found algorithmically, in a way inspired by the previous proof. The count of maximal rotational sets that contain $\mathcal{O}$ can be expressed as a closed-form formula in terms of the degree $d$, the rotation number $\frac{p}{q}$, and the digits of $\mathcal{O}$.
\end{thm}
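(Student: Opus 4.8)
The plan is to mirror the proof of Theorem~\ref{count}, the new ingredient being that the digits of $\mathcal{O}$ \emph{pre-determine} the positions of the pre-images of $0$ relative to $\mathcal{O}$, so we are counting refinements of $\mathcal{O}$ consistent with that data. First I would extract from $\mathrm{Itin}(\mathcal{O})$ the spatial interval-index sequence $c_1,\dots,c_q$ of the points of $\mathcal{O}$ (a fixed rearrangement of the digits by the permutation $i\mapsto i+p$), and hence the numbers $n_0=c_1$, $n_t=c_{t+1}-c_t$ for $1\le t\le q-1$, and $n_q=(d-1)-c_q$ of non-zero pre-images of $0$ lying in the $q+1$ arcs of $S^1\setminus(\{0\}\cup\mathcal{O})$; these satisfy $\sum_t n_t=d-1$, and $n_{q-p}\ge 1$ carries the principal pre-image of $\mathcal{O}$. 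Next I would observe, from conditions~(2)--(3) of the definition of a rotational set, that orbit membership along the $qk$ points of any rotational set $B$ is periodic with period $k$ (any $k$ cyclically consecutive points hit all $k$ orbits once); hence the $q$ points of $\mathcal{O}$ inside $B$ all occupy one fixed slot $m\in\{1,\dots,k\}$ of the $q$ groups, exactly one per group. So it suffices to enumerate, for each admissible $k\le d-1$ and each slot $m$, the ways to refine $\mathcal{O}$ into such a $B$.

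The second step identifies ``refining $\mathcal{O}$'' with a constrained pre-image placement. Inserting the $q(k-1)$ points of the $k-1$ new orbits subdivides $\mathcal{O}$-gap $t$ into $\nu_t+1$ sub-gaps, where $\nu_t=k-1$ for $1\le t\le q-1$, $\nu_0=m-1$, and $\nu_q=k-m$ (the new points in an interior $\mathcal{O}$-gap are the tail slots $m+1,\dots,k$ of one group followed by the head slots $1,\dots,m-1$ of the next, with the inter-group sub-gap in between). By Proposition~\ref{itin} and Remark~\ref{itinToPreimage}, $B$ is completely determined by how the $n_t$ pre-images already present in $\mathcal{O}$-gap $t$ distribute among its sub-gaps, for every $t$. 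Two validity constraints then survive, exactly parallel to Theorem~\ref{count}: (i) the principal pre-image of $B$ must sit in the prescribed inter-group sub-gap of $\mathcal{O}$-gap $q-p$, so one of those $n_{q-p}$ pre-images is pinned there; and (ii) each of the $k-1$ intra-group slot-positions $s\in\{1,\dots,k-1\}$ --- each of which reappears as a sub-gap in exactly $q$ of the $\mathcal{O}$-gaps --- must receive at least one pre-image, or the orbits in slots $s$ and $s+1$ would coincide.

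With this reduction the count is combinatorial. For fixed $(k,m)$, distributing $n_t$ indistinguishable pre-images among $\nu_t+1$ sub-gaps gives $\binom{n_t+\nu_t}{\nu_t}$ choices (with one pinned into the inter-group sub-gap when $t=q-p$), and imposing~(ii) is an inclusion--exclusion over the subfamily of slot-positions left empty, each deletion removing one sub-gap from each of $q$ of the $\mathcal{O}$-gap factors; this yields a finite alternating sum of products of binomial coefficients in the $n_t$, $q$, $k$, and $m$. Summing over $m=1,\dots,k$ counts the rotational sets with exactly $k$ orbits containing $\mathcal{O}$, and summing over $k$ counts all of them. The \emph{maximal} ones are obtained by retaining only those $B$ that admit no further orbit --- equivalently, those whose pre-image layout leaves no slot-gap with a spare pre-image to wedge a new orbit into --- a condition I would translate into an inequality among $d$ and the $n_t$. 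Since every $B\supseteq\mathcal{O}$ yields a unique triple $(k,m,\text{distribution})$ and conversely, the algorithm enumerating $(k,m)$ and then all valid distributions is exhaustive, and the resulting count is a closed-form expression in $d$, $p/q$, and the digits of $\mathcal{O}$, as claimed.

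I expect two points to be the real obstacles. The first is the inclusion--exclusion in~(ii): unlike in Theorem~\ref{count}, where a single alternating binomial index sufficed, starving a slot-position here simultaneously shrinks $q$ different $\mathcal{O}$-gap factors, so the correction is a sum over subsets of $\{1,\dots,k-1\}$, and one must check it collapses to a tidy summation (and handle the end gaps separately, since $\nu_0=m-1$ and $\nu_q=k-m$ depend on $m$). The second is making ``maximal'' precise: one must prove that a rotational set containing $\mathcal{O}$ fails to admit one more orbit exactly when a clean inequality on the $n_t$ holds --- i.e.\ that a non-maximal such set is precisely one with an empty slot-position and a spare pre-image available --- and must determine which slots $m$ are actually realizable for the given $\mathcal{O}$ (those for which $\mathcal{O}$ can be separated from both intra-group neighbors using its fixed $n_t$). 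Pinning down those two thresholds is where the remaining care lies; everything else is the stars-and-bars bookkeeping already rehearsed in the proof of Theorem~\ref{count}.
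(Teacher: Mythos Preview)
The paper does not actually prove this theorem: immediately after stating it, the authors write that the process is ``similar'' to that of Theorem~\ref{count}, say ``We leave this investigation to the reader,'' and point to a GitHub repository for an implementation. There is no argument in the paper to compare yours against beyond that one-sentence gesture.

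Your proposal is therefore far more detailed than anything the paper supplies, and it follows precisely the strategy the paper hints at---reducing the problem to pre-image placement relative to the fixed gap data of $\mathcal{O}$ and then running an inclusion--exclusion parallel to Theorem~\ref{count}. The key structural observations you make (that every point of $\mathcal{O}$ occupies the same slot $m$ in its group, that the $n_t$ are read off from the spatial digit sequence $c_1,\dots,c_q$, that each slot-transition $s$ occurs in exactly $q$ of the $\mathcal{O}$-gaps, and that the principal pre-image of $B$ must land in the inter-group sub-gap of $\mathcal{O}$-gap $q-p$) are all correct and are exactly what one needs. The two obstacles you flag---that the inclusion--exclusion now depends on how the excluded slot-positions split between $\{1,\dots,m-1\}$ and $\{m,\dots,k-1\}$ because of the asymmetric end gaps, and that ``maximal'' must be turned into a concrete inequality on the $n_t$---are genuine and are where the real work lies; the paper does not address them either. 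In short, your outline is sound and is essentially the proof the paper declined to write.
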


Similar to the count of all rotational sets under certain parameters being given through the valid placements of pre-images of $\zero$, the orbits that belong to rotational sets containing a given orbit $\mathcal{O}$ can be identified through a similar process.
We leave this investigation to the reader. However, an algorithm for identifying such rotational sets containing an orbit $\mathcal{O}$ based on the proof strategy of the previous theorem can be accessed on GitHub \cite{Hugh:2023}.

\subsection{Examples}  In order to demonstrate the usefulness of the proposed algorithm, consider the rotational orbit $[\_012, \_120, \_201]$ under $\sigma_3$.  Applying the  algorithm, we determine that this orbit can be paired with either the orbit $[\_002, \_020, \_200]$ or the orbit $[\_112, \_121, \_211]$ (but not both since a rotational set for $\sigma_3$ contains at most two orbits).  The rotational polygons formed by these three sets are in the first row of Figure~\ref{rabbits}.

\begin{figure}
    \centering
    \includegraphics[width=.33\textwidth]{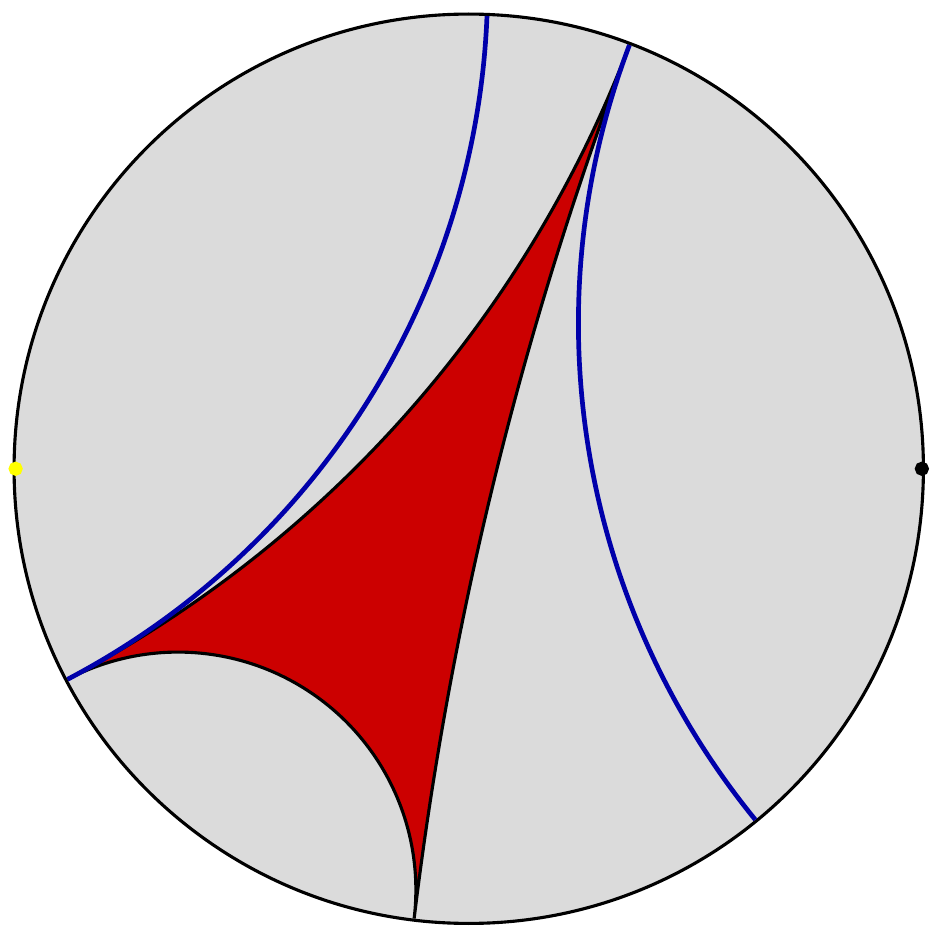}\hfill
    \includegraphics[width=.33\textwidth]{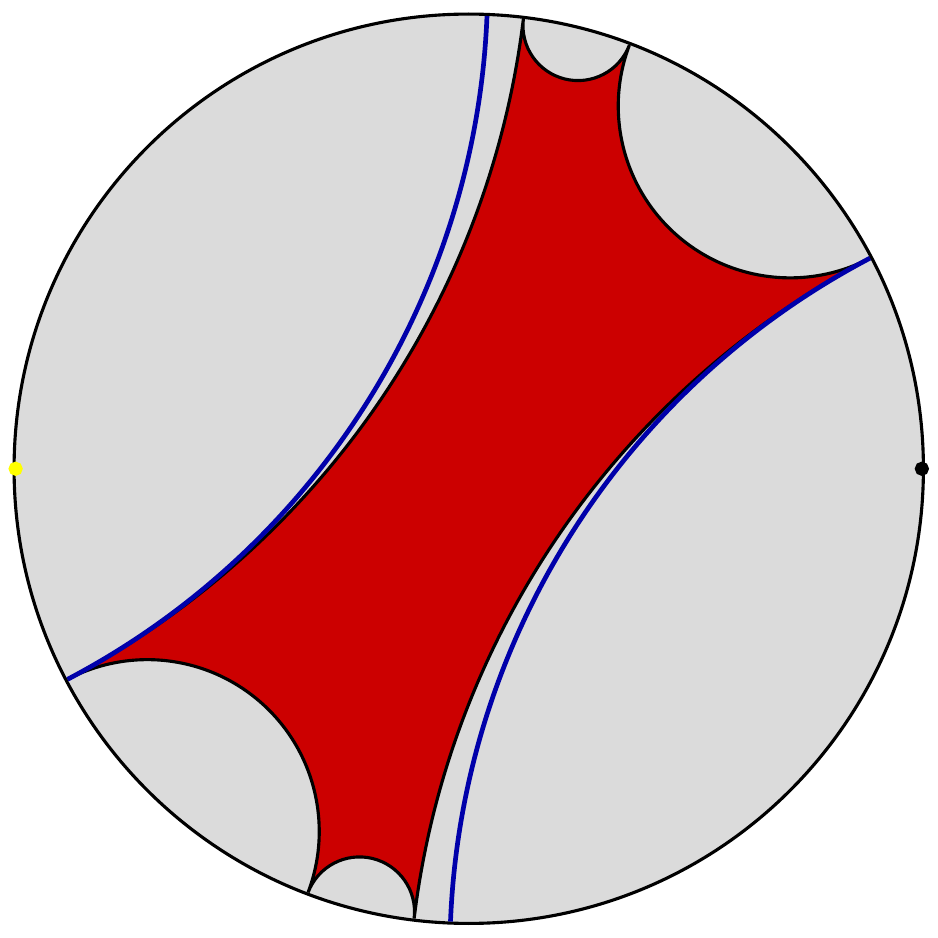}\hfill
    \includegraphics[width=.33\textwidth]{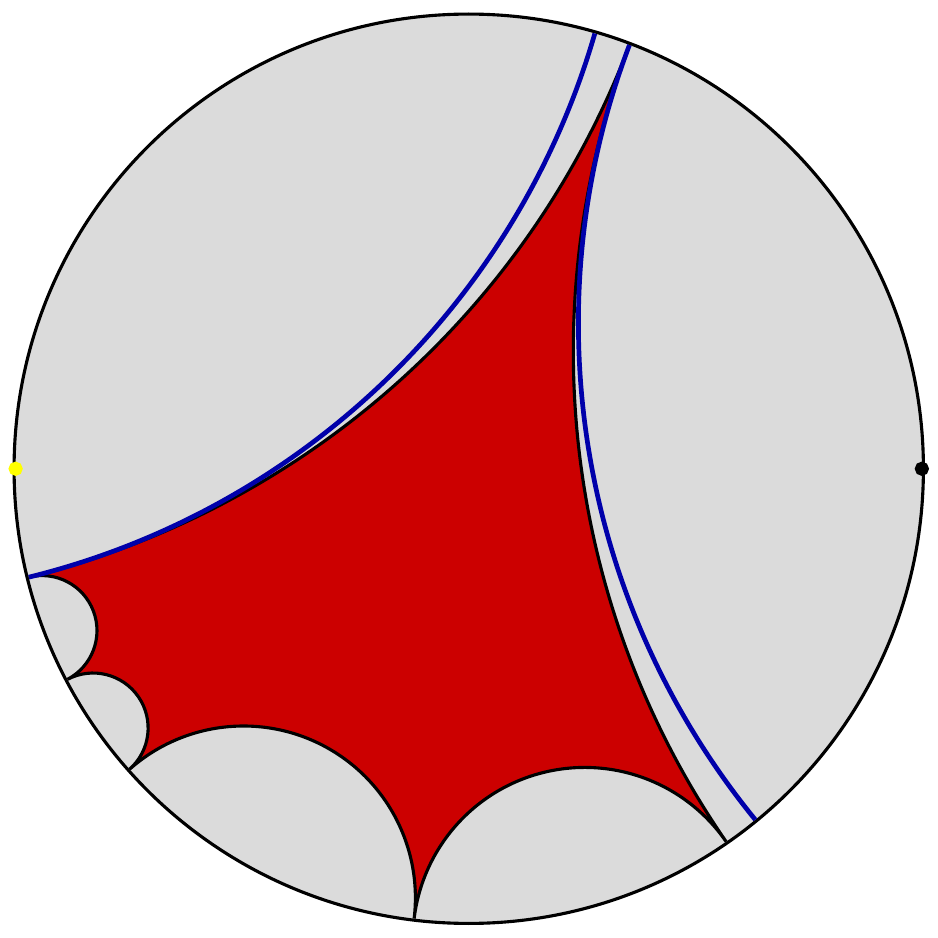}\\
     \includegraphics[width=.33\textwidth]{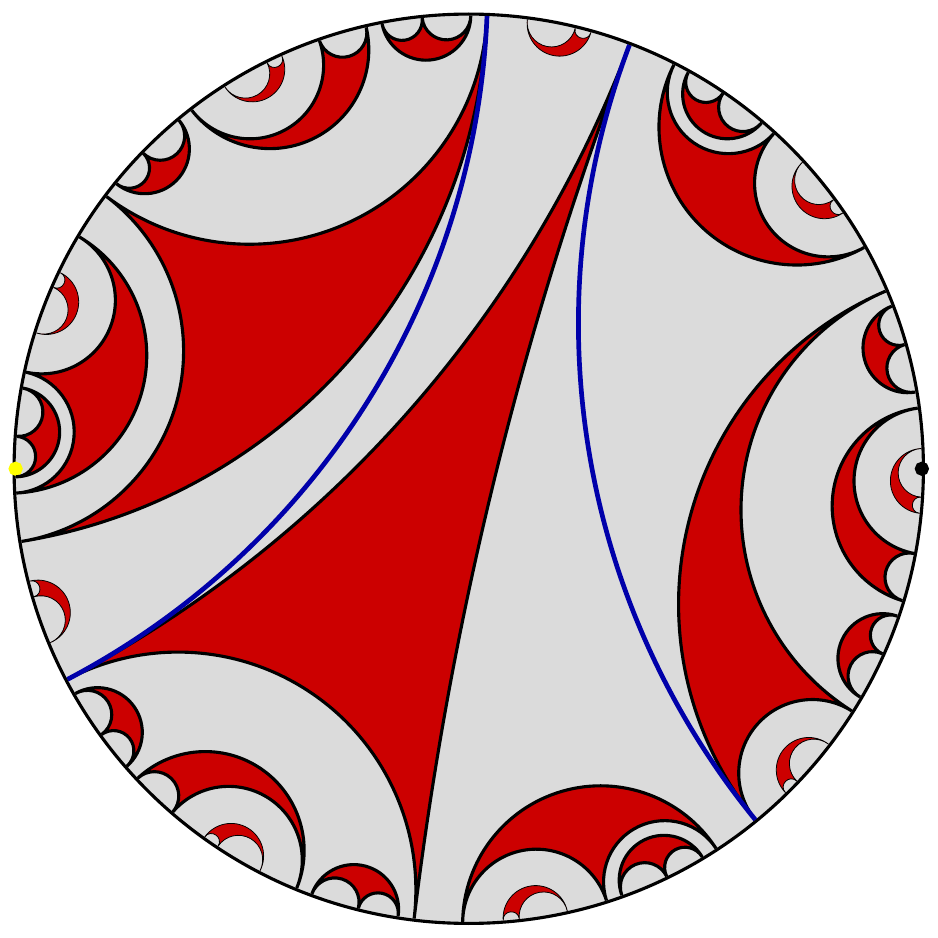}\hfill
    \includegraphics[width=.33\textwidth]{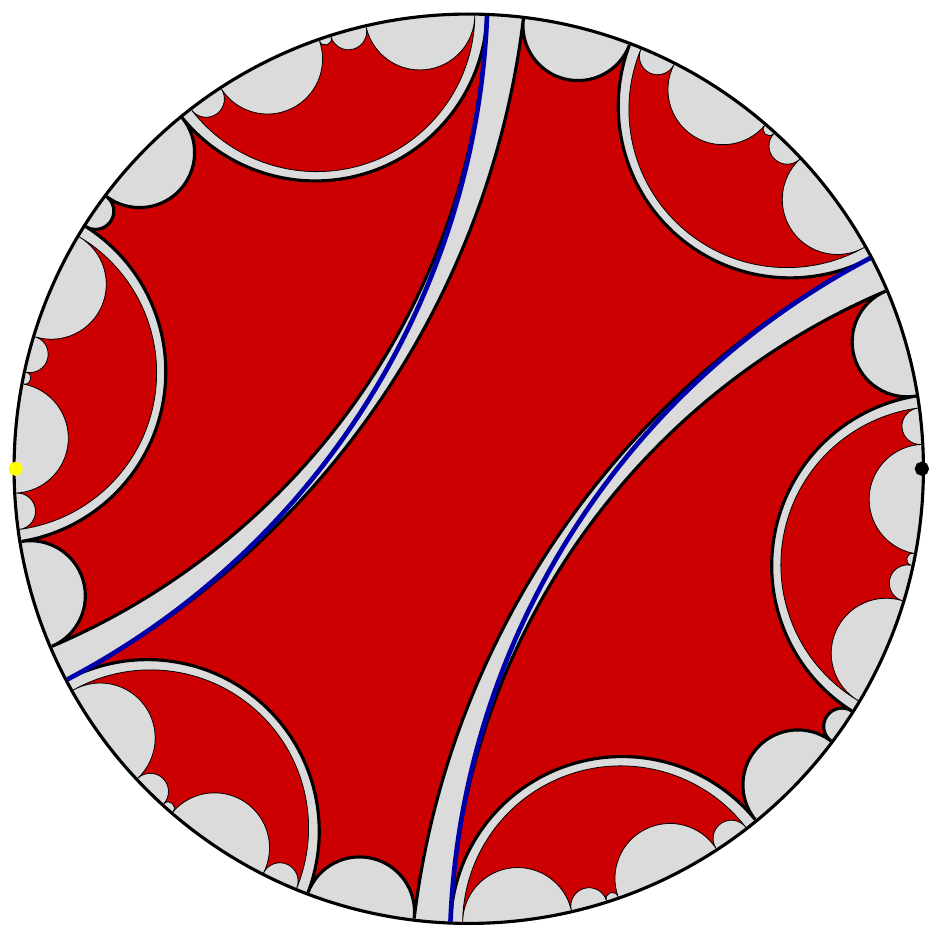}\hfill
    \includegraphics[width=.33\textwidth]{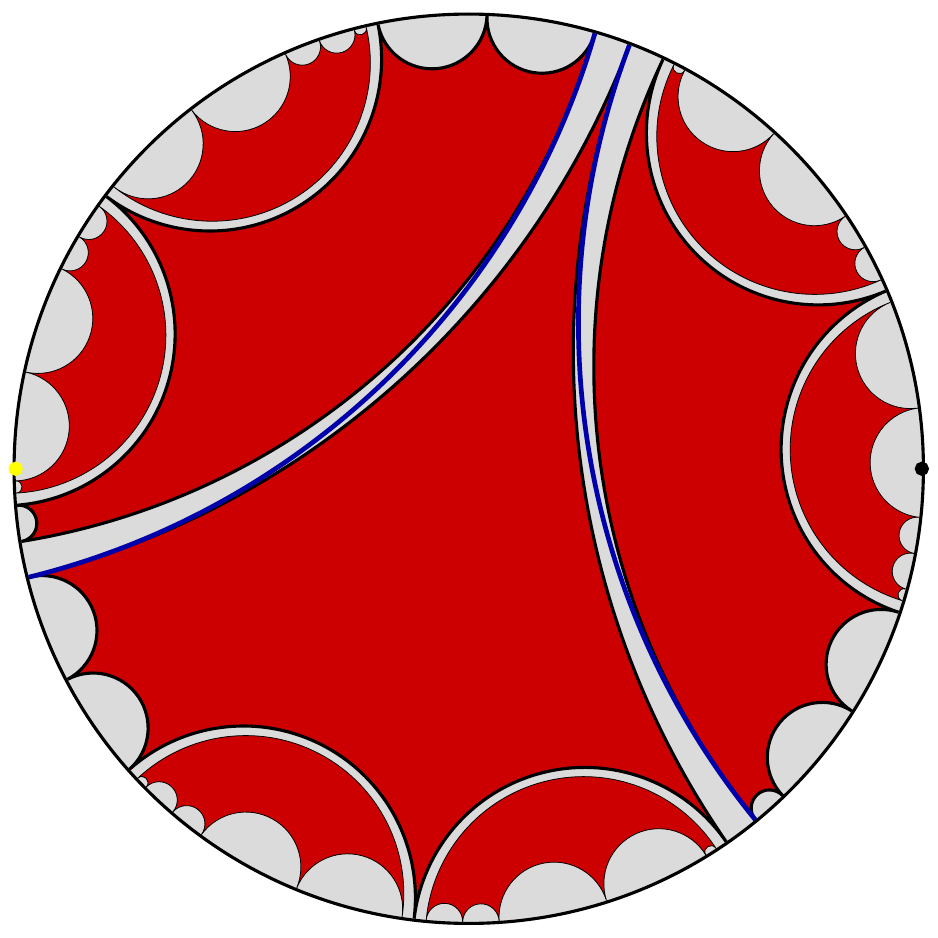}\\
     \includegraphics[width=.33\textwidth]{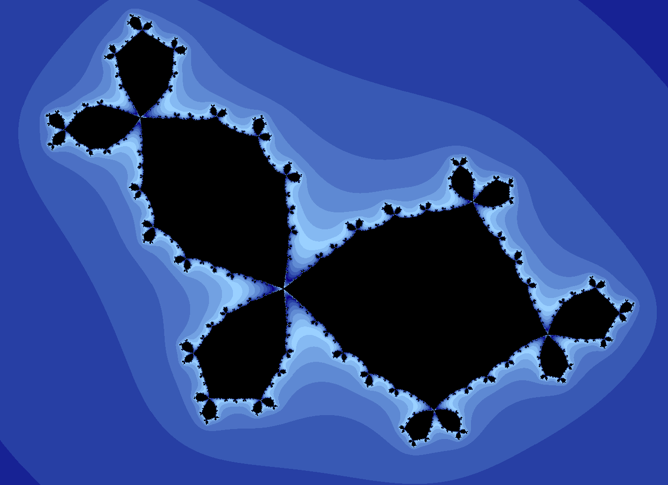}\hfill
    \includegraphics[width=.33\textwidth]{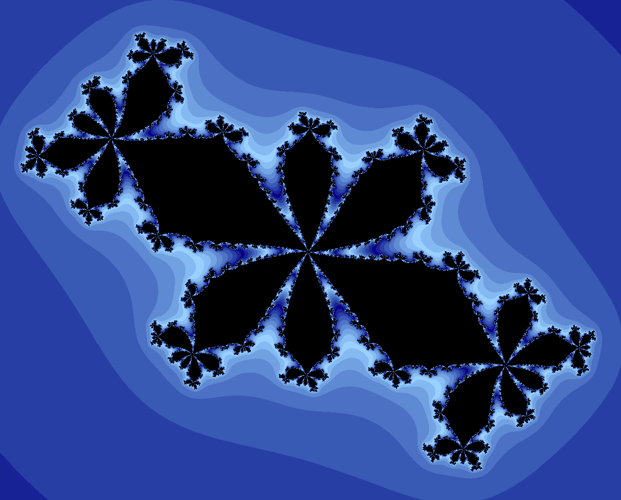}\hfill
    \includegraphics[width=.33\textwidth]{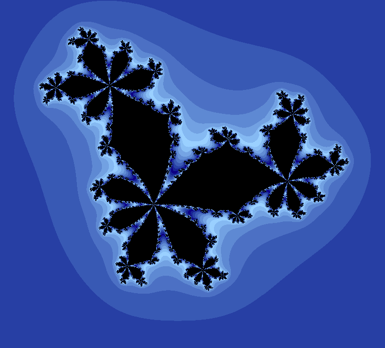}
    \caption{Triangle and two hexagon expansions for $\sigma_3$.\\
    Initial rotational data (left to right):\\
    ${\_012}$ orbit, ${\_012}$ \& ${\_002}$ orbits, ${\_012}$ \& ${\_112}$ orbits.\\
    Pullbacks laminations (left to right): \newline 3 steps, 2 steps, 2 steps.\newline
    Polynomial equations (left to right): \newline $P(z)=(-0.607786+1.05435i)z-(0.316863+0.325784i)z^2+z^3/3$ \newline
    $P(z)=(0.89012-0.512216i)z^2+z^3/3$\newline
    $P(z)=-0.156693-0.682581i+(-0.355147+0.182883i)z^2+z^3/3$}
    \label{rabbits}
\end{figure}

The second row of the figure shows the first few stages of the pullback lamination determined by the initial polygons and an appropriate choice of branches of the inverse of $\sigma_3$ \cite{LamBuilder:2021}.  The corresponding Julia sets, found using Mathematica and FractalStream \cite{Brittany:2023}, are displayed in the third row of the figure.  Each polygon in the corresponding lamination represents a  junction point in the Julia set.  The central polygon in each case represents a fixed point of the polynomial for the corresponding Julia set.  Note that the lamination represents reasonably well the geometry of the corresponding Julia set if you imagine the polygons shrunk to points.  In this process it is important to note that we found the rotational polygons and laminations {\it before} we found the polynomials and their Julia sets. 




\subsection{Future Questions}
If you find yourself interested in continuing this work, perhaps consider the following questions as starting points for areas of research:
\begin{enumerate}
    \item The formula for the count provided in Theorem \ref{count} is quite complicated. Tan (\cite{Tan:2019} Theorem 3.2) provides an equivalent count, though without a basis in first principles. How can our count be simplified from first principles?
      \item Verify that the formula  found by Tan and the closed-form formula in our Theorem 3.1 give the same count of rotational sets.
      \item Consider the closed form formula given by Theorem \ref{count}. The  formula implies that for a fixed $d$ and $k$, you can express the formula as a polynomial equation in terms of $q$. What is the degree of the polynomial for a given $d$ and $k$? Is it possible to generate the coefficients for the polynomial for a given $d$ and $k$ in closed form? What can this teach us about the count of rotational sets for a fixed degree and rotational set size?
    \item Consider the lattice \cite{Lattice} of rotational sets, partially ordered by subset inclusion, for a given degree $d$ and rotation number $\frac pq$, where a join \cite{Join} between sets $A$ and $B$ represents the union of $A$ and $B$.
    Given a degree $d$, what can we learn about the underlying structure of these lattices, and what can it teach us about rotational sets?
   \end{enumerate}

\bibliographystyle{amsplain}

\begin{thebibliography}{1}

\bibitem{Brittany:2023} B. Burdette, C. Falcione, C. Hale, and J. Mayer. \newblock Unicritcal and maximally critical laminations. \newblock 	arXiv:2303.17668 [math.DS] (2023).

\bibitem{Blokh:2006} A. Blokh, J. Malaugh, J. Mayer, L. Oversteegen, D. Parris. \newblock Rotational subsets of the circle under $z^d$. \newblock {\em Topology and its Applications} 153 (2006), 1540--1570.

\bibitem{Goldberg:1992} L. Goldberg. \newblock Fixed points of polynomial maps, Part 1, Rotation subsets of the circle. \newblock 
{\em Annales scientifiques de l’É.N.S. 4e série}, tome 25, no 6 (1992), p. 679--685.

\bibitem{Goldberg:1993} L. Goldberg and J. Milnor. \newblock Fixed points of polynomial maps, Part II.  Fixed point portraits. \newblock {\em Annales scientifiques de l’É.N.S. 4e série}, tome 26, no 1 (1993), p. 51--98.

\bibitem{McMullen:2010} C. McMullen. \newblock Dynamics on the unit disk: Short geodesics and simple cycles. \newblock {\em Commentarii Mathematici Helvetici}, 85 (2010) 723--749.

\bibitem{Tan:2019} Y. Tan. \newblock Counting rotational subsets of the circle $\mathbb{R}/\mathbb{Z}$ under the angle-multiplying map $t\mapsto dt$. \newblock arXiv:2207.03594v2 [math.CO] (2022).

\bibitem{Hugh:2023} Github: MaxSetGeneratingAlgo. https://github.com/mjmoorman03/MaxSetGeneratingAlgo/tree/main.

\bibitem{LamBuilder:2021} Falcione, C. Lamination Builder. https://csfalcione.github.io/lamination-builder/.

\bibitem{InEx} Wikipedia: Inclusion-Exclusion Principle. https://en.wikipedia.org/wiki/Inclusion-exclusion\_principle.

\bibitem{Join} Wikipedia: Join and Meet. https://en.wikipedia.org/wiki/Join\_and\_meet.

\bibitem{Lattice} Wikipedia: Lattice (order).
https://en.wikipedia.org/wiki/Lattice\_(order).

\bibitem{Mandel}
Wikipedia: Mandelbrot Set. https://en.wikipedia.org/wiki/Mandelbrot\_set.

\end{thebibliography}

\end{document}